\documentclass[twoside,11pt]{amsart} 
\usepackage{enumitem}
\usepackage{epsfig}
\usepackage{latexsym} 
\usepackage{amsmath} \usepackage{amssymb}
\usepackage{amsfonts,amssymb,amsmath,amscd,amsthm}
\usepackage{epsfig}
\usepackage[dvipsnames]{xcolor}
\usepackage{booktabs}
\usepackage{placeins}   % for FloatBarrier command

 \keywords{ primes, gaps, prime constellations, Eratosthenes sieve}

\subjclass{11N05, 11A41, 11A07}

\makeatother

\newtheorem{lemma}{Lemma}

\theoremstyle{definition}

\newcommand{\pgap}   {{\mathcal G}}
\newcommand {\gap}     {\makebox[0.075 in]{}}

\newcommand {\fto}     {\longrightarrow}

\newcommand {\set}[1]  {\left\{ {#1} \right\}}

\begin{document}

\title[On nonconvex constellations among primes II]{On nonconvex constellations \\ among primes II: $(J,|s|)=(458,3240)$ }

\date{19 May 2026}

\author{Fred B. Holt}
% {\smallit Department of Mathematics, One University, Somewhere, State, Country}\\
\address{\tt fbholt@primegaps.info, www.github.com/fbholt/Primegaps-v2}
																														
\begin{abstract}
Extending our work on the $k$-tuple conjecture, we previously applied those methods to the Engelsma counterexamples (narrow constellations)
of length $J=459$ and span $|s|=3242$.  Here we extend that analysis to the $116$ Engelsma counterexamples of length $J=458$
and $|s|=3240$.

We track the evolution of these $116$ counterexamples from inadmissible driving terms starting in the cycle of gaps $\pgap(11^\#)$
up through their first appearance in $\pgap(113^\#)$.  
We continue developing primorial coordinates for each admissible instance
through a breadth-first exhaustive search through $\pgap(211^\#)$.

Each of the $(458,3240)$ constellations sits inside a $(459,3242)$ constellation, which we call its {\em parent}.  We show that
no $(458,3240)$ constellation occurs outside of its parent until the cycle $\pgap(227^\#)$.
The early evolution of the $(458,3240)$ constellations is dominated by the evolution of their parents, which we have previously studied.

For each $(458,3240)$-counterexample we calculate its asymptotic relative population, among other constellations
of length $J=458$.
\end{abstract}

\maketitle
\pagestyle{myheadings}
\thispagestyle{empty}
\baselineskip=12.875pt
\vskip 30pt

\section{Introduction}
This paper continues our study of admissible nonconvex constellations \cite{FBHnonconvex}.  In that first paper we established the 
basics for this study, including establishing that the smallest nonconvex constellations discovered by Engelsma \cite{Engtbl, Sutherland} are
$$ (J,|s|) \; = \; (458,3240), \; {\rm and} \gap (459,3242).$$
We previously \cite{FBHnonconvex} studied the $58$ nonconvex constellations in $(459,3242)$.  Here we leverage that work to study the $116$ nonconvex
constellations in $(458,3240)$.

The prior work \cite{FBHnonconvex} provides the background for the present study.

An admissible constellation $s$ of length $J$ is said to be {\em nonconvex} iff 
$$ \pi(|s|) < J.$$
That is, the number of small primes in the interval $(0,|s|]$ is less than the number of gaps in this constellation.  Figure~\ref{ConvexFig}
illustrates this.

\begin{figure}[hbt]
\centering
\includegraphics[width=4in]{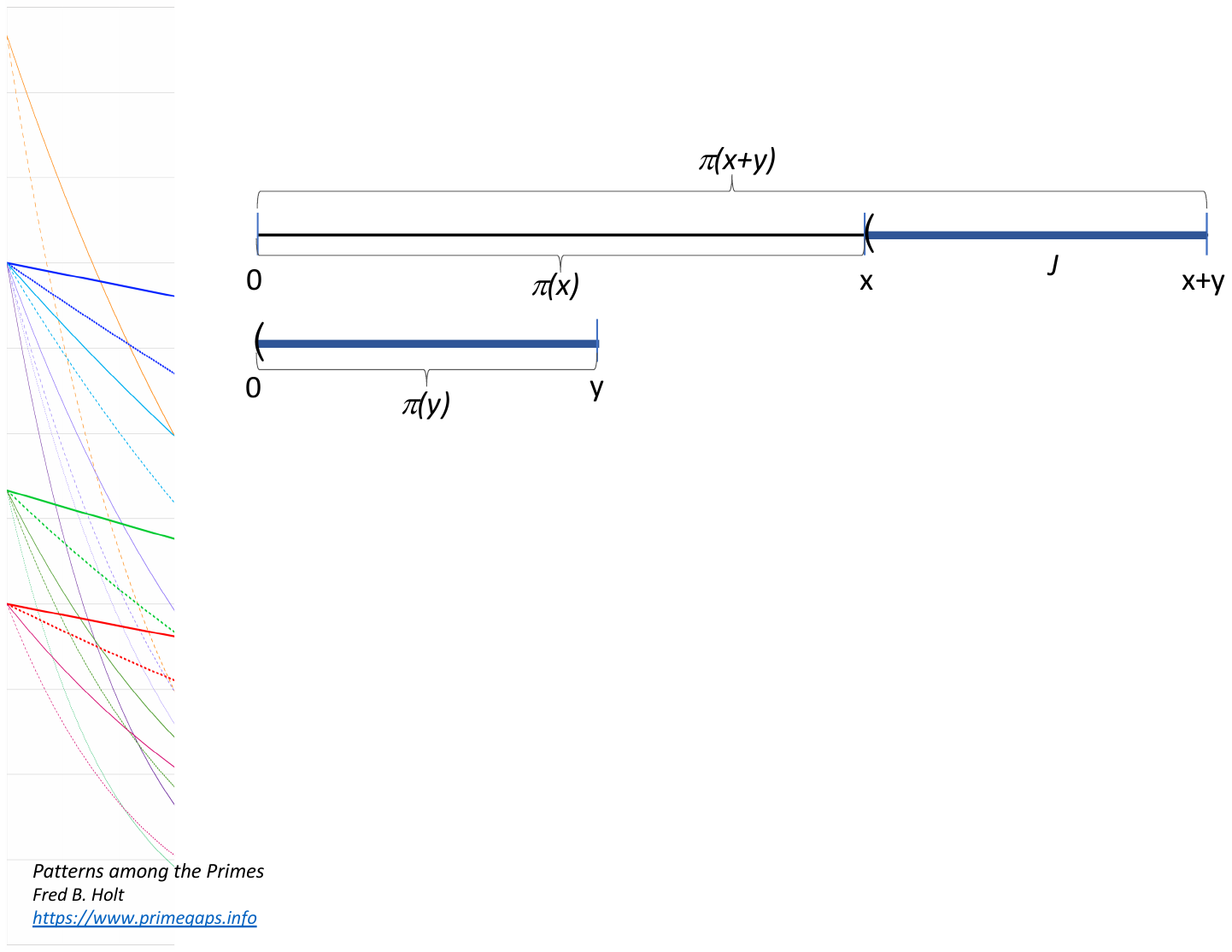}
\caption{\label{ConvexFig} Convexity and nonconvexity.  We are comparing the density of primes in an interval away from the origin, with the
density of an interval of equal length at the origin. }
\end{figure}

We have previously shown \cite{FBHktuple} that all admissible constellations
arise and persist in the cycles of gaps $\pgap(p^\#)$ among the $p$-rough numbers, 
and that the population of {\em every} admissible constellation of length $J$ ultimately grows
as ${\Theta(\prod_{p > J+1}(p-J-1))}$ within these cycles.  So the Engelsma constellations in $(458,3240)$  and $(459,3242)$ do arise within
the cycles $\pgap(p^\#)$.  In this paper we track their early co-evolution.

No actual instance of a nonconvex constellation among primes has yet been discovered.
That is, nonconvex admissible constellations $s$ have been identified, but we do not yet know of a specific instance $\gamma_0$
where $s$ occurs among primes.

We have the narrow constellation $s$, but we still need the initial generator $\gamma_0$ for an instance of $s$ among primes.
If any of the nonconvex constellations occur among primes, these instances would be counterexamples to Hardy and
Littlewood's convexity conjecture from 1923 (\cite{HL}, p.54).

We recently \cite{FBHnonconvex} looked at the $58$ $(459,3242)$ constellations identified by Engelsma.  These constellations all begin and end
with the gap $g=2$.  If we drop the ending gap $g=2$ from these, we get $58$ of the $(458,3240)$ constellations; and if we
drop the starting gap $g=2$ from the $(459,3242)$ constellations we get the other $58$ of the $(458,3240)$ constellations.
So every instance of a $(459,3242)$ constellation contains a pair of $(458,3240)$ constellations.

For a $(J, |s|)$-counterexample $s$, we call its minimal extension to length $J+1$ its {\em parent}.  The $(458,3240)$ constellations all have
parents that are $(459,3242)$-counterexamples.

Under the three-step recursion \cite{FBHSFU, FBHktuple} across the cycles of gaps $\pgap(p^\#)$ in Eratosthenes sieve, we know that
there are no admissible driving terms for any of the $(458,3240)$ or  $(459,3242)$ constellations, other than the constellations themselves.

Let $s$ be one of the $(459,3242)$ constellations.  If an instance of $s$ is fused on the left side of the initial gap $g=2$ or on 
the right side of the terminal gap $g=2$, we thereby obtain an instance of a $(458,3240)$ constellation outside of a $(459,3242)$
constellation.  So as Eratosthenes sieve continues, instances of the $(458,3240)$ constellations evolve both inside and outside
of the $(459,3242)$ constellations. 

\begin{figure}[hbt]
\centering
\includegraphics[width=5in]{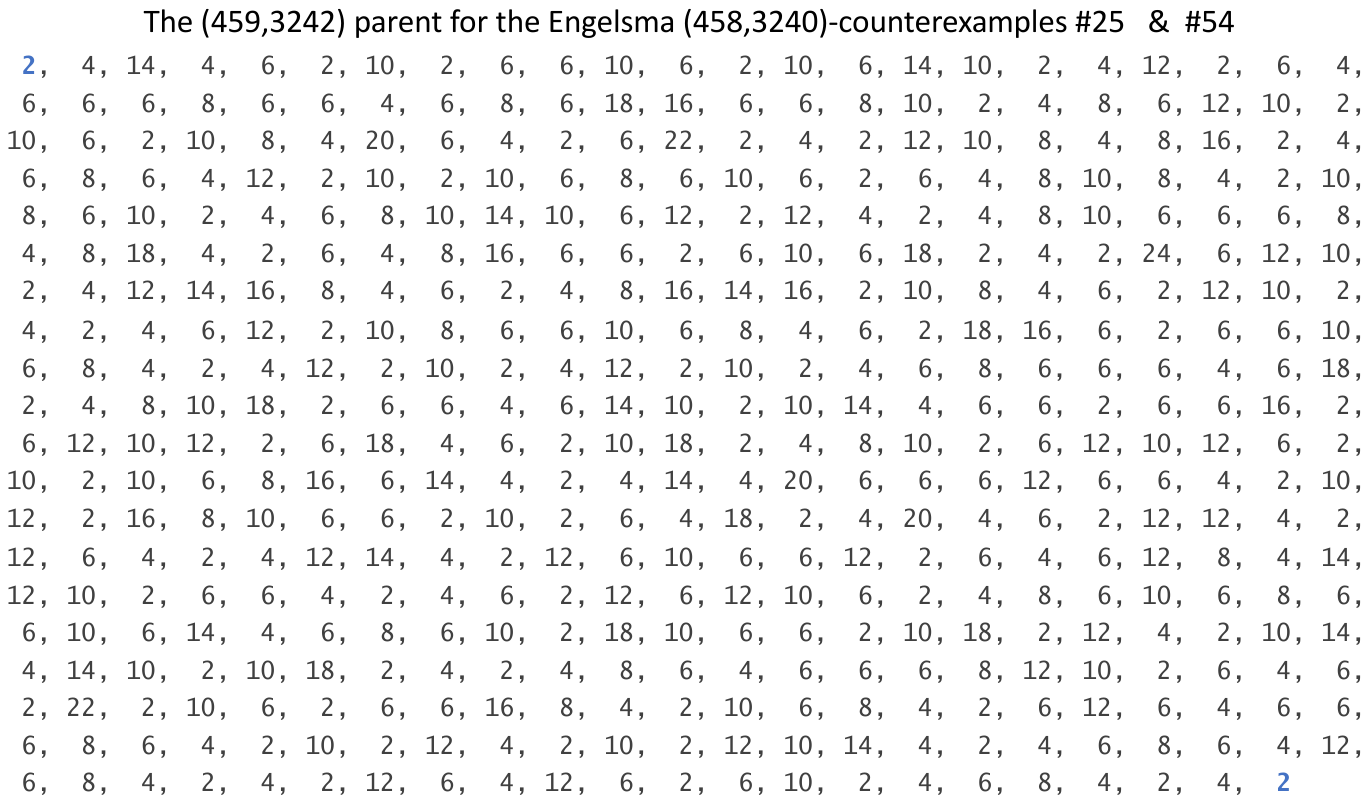}
\caption{\label{Eng458Fig} An example of two $(458,3240)$-constellations  $[2 \hat{s}]_{25}$ and  $[\hat{s} 2]_{54}$ sitting inside their
$(459,3242)$ parent.  $[2 \hat{s}]_{25}$ drops the last gap of $2$ from its parent, and $[\hat{s} 2]_{54}$ drops
the initial gap $2$.}
\end{figure}

In this paper we provide a brief analysis of the $116$ $(458,3240)$ constellations, consistent with our analysis of the $58$
$(459,3242)$ constellations.  Since $s_{25}$ has the highest asymptotic relative population among the $(458,3240)$ constellations, we
use it for our running example.

The raw data for the Engelsma counterexamples is available at Sutherland's website \cite{Eng2}.   The computations introduced here, including the
sorting of the constellations by the prefixes of their primorial coordinates, is available on GitHub.\footnote{https://github.com/fbholt/primegaps-v2}

% SECTION: 458,3240 and their parents  ====================================
\section{Engelsma $(458,3240)$-counterexamples and their parents}
Engelsma identified $58$ $(459,3242)$-counterexamples, each beginning and ending with a gap $g=2$.  Of the $116$ $(458,3240)$-counterexamples 
half of these drop the last gap $g=2$ from a $(459,3242)$-counterexample and half of these drop the initial gap $g=2$.  
Figure~\ref{Eng458Fig} shows $[2 \hat{s}]_{25}$ and $[\hat{s} 2]_{54}$ sitting inside their parent $[2 \hat{s} 2]_{25}$. 
This is a natural 2-to-1 association between the $(458,3240)$-counterexamples and the $(459,3242)$-counterexamples.

We use this 2-to-1 correspondence often enough that we need a dedicated notation.  By inspecting the data \cite{Eng2}, we know that
every $(459,3242)$ constellation has the form $[2 \hat{s} 2]$ where $\hat{s}$ is a $(457,3238)$ constellation. 
For each $[2 \hat{s} 2]$ in $(459,3242)$, both $[2 \hat{s}]$ and $[\hat{s} 2]$ are $(458,3240)$ constellations.

The $58$ $(459,3242)$-counterexamples come in $29$ symmetric pairs under reversal of the constellations.  We index these constellations
by the unique prefixes of their primorial coordinates.  For the $29$ $(459,3242)$-counterexamples that begin $s=2 \, 4 \, 14 \, \ldots$,
in $\pgap(11^\#)$ there is a unique inadmissible 
driving term starting at $\gamma_0=107$.  

For the $29$ reversed $(459,3242)$-counterexamples, there is a unique inadmissible constellation
in $\pgap(11^\#)$ starting at $\gamma_0=1271$.  Under this ordering, the constellation of index $57-j$ is the reverse of the constellation with 
index $j$.

When we move toward the $(458,3240)$-counterexamples, we also index them by their prefixes in primorial coordinates.
These prefixes are listed in Figure~\ref{EngPrefixFig}.  Since all of the instances of the $(458,3240)$ constellations occur inside their
$(459,3242)$ until at least $\pgap(227^\#)$ and the prefixes of the unique instances only last into $\pgap(131^\#)$ or $\pgap(137^\#)$,
these prefixes are identical to the prefixes for their $(459,3242)$ parents, with two exceptions.  For those parents $[2 \hat{s} 2]$ that have
initial generator $\gamma_0=107$ in $\pgap(11^\#)$, their tails $[\hat{s} 2]$ have $\gamma_0=109$.  Similarly, for the parents that have
$\gamma_0=1271$, their tails $[\hat{s} 2]$ have $\gamma_0=1273$.

Indexing the constellations under the order of the unique prefixes for their primorial expansions, we have relations among the indices as illustrated in 
Figure~\ref{NonconmapFig}.

\begin{figure}[h]
\centering
\includegraphics[width=3.75in]{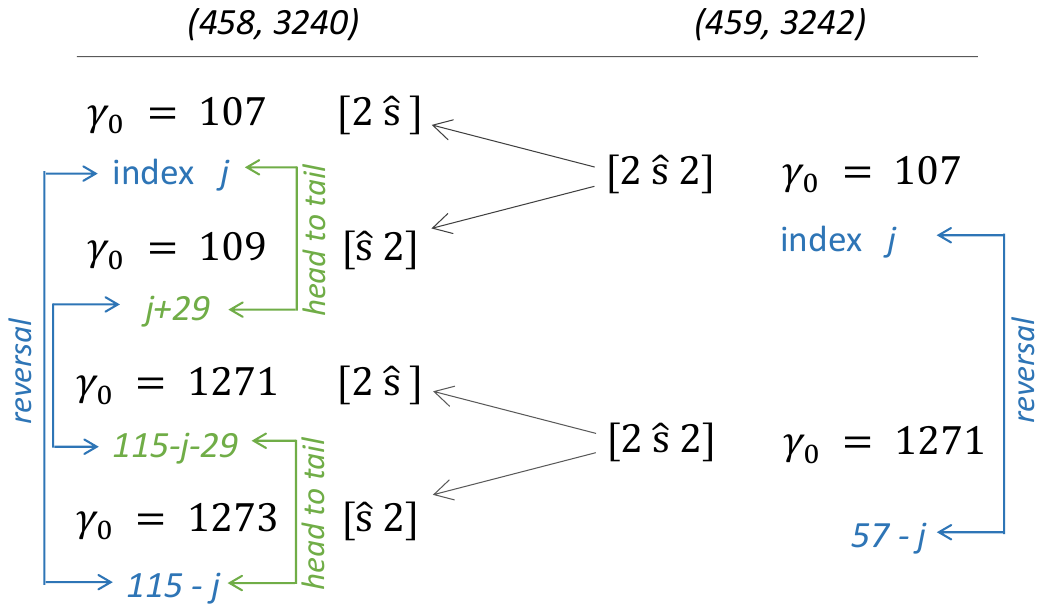}
\caption{\label{NonconmapFig} The relationships among the $(458,3240)$ and $(459,3242)$ constellations, when ordered by the prefixes for
their primorial coordinates.}
\end{figure}

We lean heavily on the analysis of the $(459,3242)$-counterexamples \cite{FBHnonconvex}, 
because no instance of a $(458,3240)$ counterexample occurs outside of its $(459,3242)$ parent until $\pgap(227^\#)$.

\begin{lemma}
Let $[2 \hat{s} 2]$ be a $(459,3242)$ counterexample of index ${0\le j \le 28}$, when the constellations are ordered by the prefixes of their primorial coordinates.

Then $[2 \hat{s}]$ is a $(458,3240)$ counterexample of index $j$, and $[2 \hat{s}]$ first occurs outside of its parent in $\pgap(227^\#)$. 

And $[\hat{s} 2]$ is  also a $(458,3240)$ counterexample of index $j+29$, and $[\hat{s} 2]$ first occurs outside of its parent in $\pgap(269^\#)$.
\end{lemma}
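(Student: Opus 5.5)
The index claims and the timing claims need different arguments, so I will handle them separately.

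For the indices, I would use the fact stated above: the prefixes of the primorial coordinates of $[2\hat{s}]$ and $[\hat{s}2]$ agree with those of their parent $[2\hat{s}2]$, except that the initial generator shifts from $\gamma_0=107$ to $109$ (or from $1271$ to $1273$) for the tail $[\hat{s}2]$. This holds because every instance of $[2\hat{s}]$ or $[\hat{s}2]$ sits inside an instance of its parent, at least as long as the unique prefixes persist, i.e.\ into $\pgap(131^\#)$ or $\pgap(137^\#)$.

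Ordering lexicographically by these prefixes, the heads $[2\hat{s}]$ inherit exactly the parental order among themselves. The tails $[\hat{s}2]$ also inherit the parental order among themselves, but every tail has a larger leading coordinate than the corresponding head: $109>107$ and $1273>1271$. I would then check, from the prefix table in Figure~\ref{EngPrefixFig}, that the ordering places the heads of parents $0,\dots,28$ first and the tails $29,\dots,57$ afterwards, as depicted in Figure~\ref{NonconmapFig}. The restriction $0\le j\le 28$ is what makes the offset exactly $29$. This step is bookkeeping.

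For the timing claims, I would argue through the three-step recursion. An instance of $[2\hat{s}]$ appears outside its parent exactly when, at some prime $p$, an instance of $[2\hat{s}2]$ in $\pgap(p^\#)$ has its terminal gap $2$ fused on the right. That is, the closing element $\gamma_0+3242$ of that instance of the parent is removed in $\pgap(p^\#)$. For $[\hat{s}2]$, the removed element must instead be the left endpoint $\gamma_0$, which fuses the initial gap $2$.

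The parent's instances in each $\pgap(p^\#)$ are known explicitly from the breadth-first exhaustive search through primorial coordinates carried out in \cite{FBHnonconvex}, continued here through $\pgap(211^\#)$ and then further. For each instance $\gamma$ of the parent in $\pgap(q^\#)$ with $q<p$, lifting to $\pgap(p^\#)$ gives the copies $\gamma+k\,q^\#$, $0\le k<p$. Endpoint fusion then occurs for exactly one residue class of $k$, namely the one solving $\gamma+3242+k\,q^\#\equiv 0 \pmod p$ (respectively $\gamma+k\,q^\#\equiv 0 \pmod p$). Since this happens at every lift of every surviving instance, the real content of the claim is that no parent instance survives to a stage where such a fusion is possible. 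More precisely, at each $p<227$ (respectively $p<269$), whenever the fusion residue is hit, the lifted copy must already have been destroyed by an interior fusion. Equivalently, the endpoint-fusing copy is never an instance of the full parent minus one endpoint.

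So the check is computational: for every instance of every parent in the exhaustive search, and every prime $p$ up to $227$ (respectively $269$), I would verify that removing the relevant endpoint never coincides with a lift in which all interior $458$ points survive. I would then exhibit a witness $k$ at $p=227$ (respectively $p=269$) where the fusion leaves all of $[2\hat{s}]$ (respectively $[\hat{s}2]$) intact, so that these really are the first appearances.

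The main obstacle is this exhaustive computation beyond $\pgap(211^\#)$, where the parent populations start to grow. I would first try to control it by noting that, while survivors are few, the constraints modulo $p$ on $k$ are explicit: a lift is a surviving instance exactly when $k$ avoids the $459$ residues killing interior points. The endpoint residue is admissible exactly when it is not among those. This reduces the search to checking residue coincidences instance by instance.
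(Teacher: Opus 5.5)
Your index argument matches the paper's: heads keep $\gamma_0=107$ and the parent's prefix, and tails shift to $\gamma_0=109$ and form the next block of $29$. The gap is in the timing part. You identify the right mechanism: the lift of a parent instance in which $p$ removes the endpoint $\gamma+3242$ (resp.\ $\gamma$) while leaving every other point. Your plan for deciding whether that lift survives, however, is to enumerate every parent instance in $\pgap(q^\#)$ for each $q$ up to $263$ and test residue coincidences instance by instance. That computation is neither available nor feasible. The paper's exhaustive search stops at $\pgap(211^\#)$, the populations in $\pgap(223^\#)$ are already $3.9\times 10^{7}$ to $4.8\times 10^{8}$, and for $s_{25}$ the product of the $\rho$ values in Table~\ref{Eng25nadm} over $137\le q\le 263$ exceeds $10^{17}$. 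Your proposed way of controlling this obstacle still works instance by instance, so the decisive step of the proof is left undone.

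The missing observation is that the coincidence you would test is the same for every instance. Write the parent's points as $\gamma+c_i$ with $0=c_0<c_1=2<\cdots<c_{458}=3240<c_{459}=3242$. Since $\gcd(q^\#,p)=1$, the lifts $\gamma+k\,q^\#$, $0\le k<p$, hit each residue class mod $p$ exactly once. So exactly one lift has $p\mid \gamma+3242$, and in that lift $p$ also divides some $\gamma+c_i$ with $i\le 458$ iff $c_i\equiv 3242 \pmod p$ for some such $i$. That condition involves only $s$ and $p$, not $\gamma$. It says the endpoint's residue is already among the residues covered by $[2\hat s]$, i.e.\ $\delta(p)=\rho_{[2\hat s]}(p)-\rho_{[2\hat s 2]}(p)=0$. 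For the tail, $\delta(p)=0$ iff $p\mid c_i$ for some $1\le i\le 459$. In fact, when $\delta(p)=0$, no instance of $[2\hat s]$ at all can have $p\mid\gamma+3242$, since that would force $p\mid \gamma+c_i$. Hence if $\delta(p)=0$ for all $p\le P$, every instance in $\pgap(P^\#)$ lies inside its parent. At the first $p$ with $\delta(p)=1$, every parent instance in the preceding cycle contributes exactly one outside instance. The timing claims therefore reduce to computing $\delta(p)$ prime by prime, which is precisely the paper's proof. We have $\delta=0$ through the unique prefixes into $\pgap(131^\#)$ and, by Figure~\ref{EngAdmisFig}, for all $p<227$ (resp.\ $p<269$), with $\delta=1$ first at $p=227$ (resp.\ $p=269$). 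No instance ever needs to be enumerated.
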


\begin{proof}
Regarding the indices, we note that for $0 \le j \le 28$ the constellation $[2 \hat{s} 2]$ has the value $\gamma_0=107$ in $\pgap(11^\#)$.
Likewise the constellation $[2 \hat{s}]$ has $\gamma_0=107$, and by computation its unique prefix will be that of its parent. 
See Figures~\ref{NonconmapFig} and~\ref{EngPrefixFig}.  
So the ordering of these constellations
is the same as for their parents, and if the parent has index $j$, the $(458,3240)$ constellation $[2 \hat{s}]$ will have index $j$.

The constellation $[\hat{s} 2]$ has $\gamma_0=109$, and after $\gamma_0$ its unique prefix will be that of its parent.  
Under ordering by their prefixes these constellations $[\hat{s} 2]$ will occur as a block after the block of $[2 \hat{s}]$, which have $\gamma_0=107$.
So if the parent $[2 \hat{s} 2]$ has index $j$, the constellation $[\hat{s} 2]$ will have index $j+29$.

To determine when  the constellations $[2 \hat{s}]$ and $[\hat{s} 2]$ occur outside of a parent, we look at their numbers of admissible
residues $\bmod \: p$, which we denote $\rho_s = p - \nu_p(s)$.  

Starting with $p=11$, we look at ${\delta = \rho_{[2 \hat{s}]} - \rho_{[2 \hat{s} 2]}}$ 
or ${\delta=\rho_{[\hat{s} 2]} - \rho_{[2 \hat{s} 2]}}$, for the heads and tails of $[2 \hat{s} 2]$ respectively. 
 Every instance of a parent $[2 \hat{s} 2]$ contains both $[2 \hat{s}]$ and $[\hat{s} 2]$, so 
both differences are non-negative.  These differences always have values $\set{0,1}$.

As long as these differences
are $0$, every instance of $[2 \hat{s}]$ and $[\hat{s} 2]$ occurs within its parent.  From the computation summarized in 
Figure~\ref{EngPrefixFig} we see that the unique prefixes extend into $\pgap(131^\#)$.  That is, there is a unique image of each $(458,3240)$ constellation
 in $\pgap(131^\#)$, sitting inside its $(459,3242)$ parent.  

Figure~\ref{EngAdmisFig} continues this analysis to the subsequent primes.
Figure~\ref{EngAdmisFig} tabulates the numbers of admissible residues for each $(458,3240)$ constellation, for primes $137 \le p \le 457$.
Cells are highlighted in green when $\delta=1$.  We observe that
for all $[2 \hat{s}]$, this first occurs for $p=227$, and for all $[\hat{s} 2]$ this first occurs for $p=269$.
\end{proof}

\begin{figure}[hbt]
\centering
\includegraphics[width=5.25in]{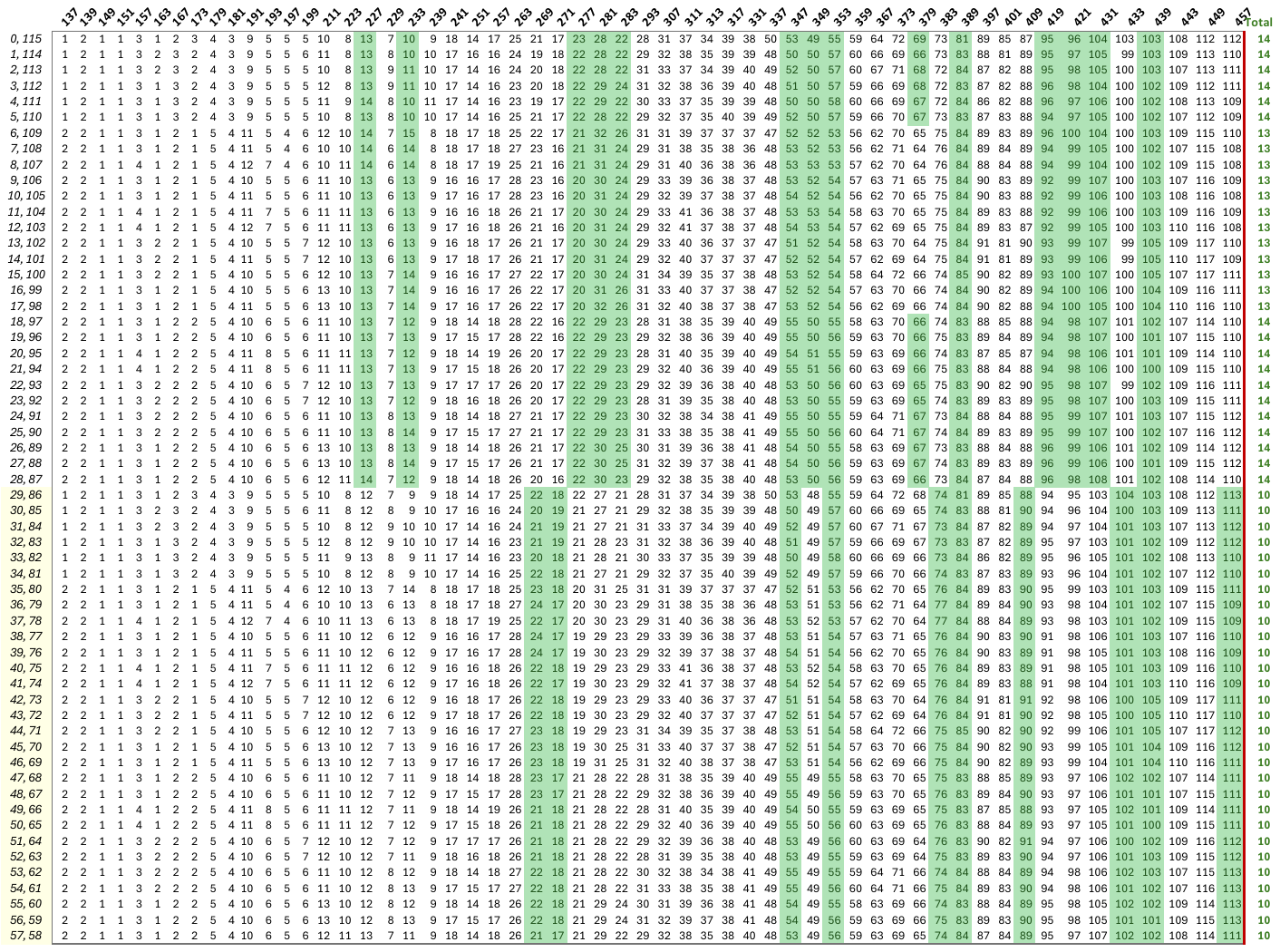}
\caption{\label{EngAdmisFig} These are the numbers $\rho_s$ of admissible residues $\bmod \: p$ for each of the $(458,3240)$-counterexamples,
 for primes ${137 \le p < 459}$.
The population of any one constellation in the cycle $\pgap(p^\#)$ is the product of these numbers up through $p$.  Green highlights the cycles in which
the number of admissible residues for $s$ exceeds the number for the parent $(459,3242)$-counterexample by $1$.  The leftmost of the green cells 
in each row indicate where instances of $(458,3240)$ first occur outside of its parent.}
\end{figure}

Table~\ref{Eng25nadm} provides a specific example.  We list the number ${\rho=p-\nu_p}$ of admissible residues $\bmod \: p$ for our
example $[2 \hat{s}]_{25}$ over the  primes $13 \le p \le 499$.  
We track the unique inadmissible driving term up through $q=109$.  The constellation $[2 \hat{s}]_{25}$ appears in $\pgap(113^\#)$
and has a unique instance ($\prod_{q\le p} \rho(q)=1$) into $\pgap(131^\#)$.  Then multiple admissible residues start appearing.
We highlight in green the primes $p$ for which there is one more admissible residue for $[2 \hat{s}]_{25}$ than for its parent
$[2 \hat{s} 2]_{25}$.  

The prime $p > 459$ marks a transition; for primes $p > 459$ there are more residue classes $\bmod \: p$ than 
the $459$ values possibly covered by any instance of a $(458,3240)$ constellation.  For the primes $p \le 459$ we need to demonstrate
admissibility by showing that $\rho >0$ for each prime.  For primes $p > 459$ the admissibility condition is trivially satisfied.

\begin{table}[htb]
\centering
\begin{tabular}{|rc||rc||rc||rc||rc||rc|}
\multicolumn{2}{c}{$p \hspace{0.2in} p-\nu_p$} & \multicolumn{2}{c}{$p \hspace{0.2in} p-\nu_p$} & \multicolumn{2}{c}{$p \hspace{0.2in} p-\nu_p$} & 
\multicolumn{2}{c}{$p \hspace{0.2in} p-\nu_p$} & \multicolumn{2}{c}{$p \hspace{0.2in} p-\nu_p$} & \multicolumn{2}{c}{$p \hspace{0.2in} p-\nu_p$} \\ \hline
{\em 13} & 1 & {\em 73} & 1 & 151 & 1 & \textcolor{ForestGreen}{\bf 233} & \textcolor{ForestGreen}{\bf 14} & 317 & 38 & \textcolor{ForestGreen}{\bf 419} & \textcolor{ForestGreen}{\bf 95} \\
{\em 17} & 1 & {\em 79} & 1 & 157 & 3 & 239 & 9 & 331 & 41 & \textcolor{ForestGreen}{\bf 421} & \textcolor{ForestGreen}{\bf 99} \\
{\em 19} &  1 & {\em 83} & 1 & 163 & 2 & 241 & 17 & 337 & 49 & \textcolor{ForestGreen}{\bf 431} & \textcolor{ForestGreen}{\bf 107} \\
{\em 23} &  1 & {\em 89} & 1 & 167 & 2 & 251 & 15 & \textcolor{ForestGreen}{\bf 347} & \textcolor{ForestGreen}{\bf 55} & 433 & 100 \\
{\em 29} &  1 & {\em 97} & 1 & 173 & 2 & 257 & 17 & \textcolor{ForestGreen}{\bf 349} & \textcolor{ForestGreen}{\bf 50} & \textcolor{ForestGreen}{\bf 439} & \textcolor{ForestGreen}{\bf 102} \\
{\em 31} & 1 & {\em 101} & 1 & 179 & 5 & 263 & 27 & \textcolor{ForestGreen}{\bf 353} & \textcolor{ForestGreen}{\bf 56} & 443 & 107 \\
{\em 37} & 1 & {\em 103} & 1 & 181 & 4 & 269 & 21 & 359 & 60 & 449 & 116 \\
{\em 41} & 1 & {\em 107} & 1 & 191 & 10 & 271 & 17 & 367 & 64 & 457  & 112  \\ \cline{11-12}
{\em 43} & 1 & {\em 109} & 1 & 193 & 6 & \textcolor{ForestGreen}{\bf 277} & \textcolor{ForestGreen}{\bf 22} & 373 & 71 & \textcolor{BlueGreen}{\bf 461} & \textcolor{BlueGreen}{\bf 115} \\ \cline{3-4}
{\em 47} & 1 & \textcolor{Maroon}{\bf 113} & \textcolor{Maroon}{1} & 197 & 5 & \textcolor{ForestGreen}{\bf 281} & \textcolor{ForestGreen}{\bf 29} & \textcolor{ForestGreen}{\bf 379} & \textcolor{ForestGreen}{\bf 67} & \textcolor{blue}{463} & \textcolor{blue}{128} \\
{\em 53} & 1 & 127 & 1 & 199 & 6 & \textcolor{ForestGreen}{\bf 283} & \textcolor{ForestGreen}{\bf 23} & 383 & 74 & \textcolor{blue}{467} & \textcolor{blue}{127} \\
{\em 59} & 1 & 131 & 1 & 211 & 11 & 293 & 31 & \textcolor{ForestGreen}{\bf 389} & \textcolor{ForestGreen}{\bf 84} & \textcolor{blue}{479} & \textcolor{blue}{135} \\
{\em 61} & 1 & 137 & 2 & 223 & 10 & 307 & 33 & 397 & 89 & \textcolor{BlueGreen}{\bf 487} & \textcolor{BlueGreen}{\bf 131} \\
{\em 67} & 1 & 139 & 2 & \textcolor{ForestGreen}{\bf 227} & \textcolor{ForestGreen}{\bf 13} & 311 & 38 & 401 & 83 & \textcolor{BlueGreen}{\bf 491} & \textcolor{BlueGreen}{\bf 148} \\
{\em 71} & 1 & 149 & 1 & 229 & 8 & 313 & 35 & 409 & 89 & \textcolor{BlueGreen}{\bf 499} & \textcolor{BlueGreen}{\bf 141} \\ \hline
\end{tabular}
 \caption{ \label{Eng25nadm}The number $\rho=p - \nu_p$ of admissible residues $\bmod p$ for the counterexample ${s_{25} \in (458,3240)}$ over the 
 primes ${13 \le p \le 499}$.  The evolution begins with inadmissible driving terms in the cycles up through $\pgap(109^\#)$.
The constellation $s_{25}$ itself first appears in $\pgap(113^\#)$, and there continues to be a unique image of $s_{25}$ until $\pgap(137^\#)$.}
\end{table}

% SECTION: co-evolution & prefixes  ====================================
\section{Co-evolution and primorial expansions}

The analysis of the $(458,3240)$ constellations is entwined with the analysis of their $(459,3242)$ parents.  We have seen above that they
are more entwined than one might expect.  No $[2 \hat{s}]$ with $\gamma_0 = 107$ occurs outside of its parent until $\pgap(227^\#)$.  
At this point their populations \cite{FBHktuple}
$$ n_s(p^\#) \; = \; \prod_{131 < q \le p} \rho_s(q) $$
in $\pgap(223^\#)$ range from $3.888 E7$ to $4.838 E8$ admissible instances, a formidable starting point for any exhaustive search or analysis.  

We start tracking the evolution in the cycle $\pgap(11^\#)$.  Of the $58$ $(459,3242)$-counterexamples, $29$ share a single
inadmissible driving term in $\pgap(11^\#)$ with $\gamma_0=107$.  The other $29$ are the mirror images of the first $29$, 
and these mirror images share a unique inadmissible driving term in $\pgap(11^\#)$ with $\gamma_0=1271$.

The $29$ constellations with $\gamma_0=107$ continue to share a single inadmissible instance into $\pgap(59^\#)$.
Then the evolutionary tree begins to fan out across the various $(459,3242)$ constellations, but each $(459,3242)$ constellation
and each $(458,3240)$ constellation continues to have a single inadmissible instance until $\pgap(113^\#)$, when all of these
admissible constellations actually occur.  

The admissible instances for each constellation remain unique across $\pgap(127^\#)$ and $\pgap(131^\#)$.  A few of them have unique admissible
instances in $\pgap(137^\#)$ as well.  
We record these
unique prefixes by their primorial coordinates in Figure~\ref{EngPrefixFig}.

Figure~\ref{EngPrefixFig} tabulates the primorial coordinates for the unique prefixes for the first $58$ $(458,3240)$-counterexamples.
The other $58$ $(458,3240)$-counterexamples are the mirror images of this first set.
As an example of how to use this table, here is the unique instance $\gamma_0(131^\#)$ for $[2 \hat{s}]_{25}$:
\begin{eqnarray*}
\gamma_0(131^\#) &= & 107 + 6\cdot 11^\# + 8\cdot 13^\# + 9\cdot 17^\# + 5\cdot 19^\# + 7\cdot 23^\# + 1\cdot 29^\#  \\
 & & \quad + 23\cdot 31^\#  + 38\cdot 37^\# + 34\cdot 41^\# + 46\cdot 43^\# + 20\cdot 47^\# + 13 \cdot 53^\# \\
 & & \quad + 13\cdot 59^\# + 39\cdot 61^\# + 42\cdot 67^\# + 45\cdot 71^\# + 54\cdot 73^\# + 82\cdot 79^\# \\
 & & \quad + 79\cdot 83^\# + 79\cdot 89^\# + 82\cdot 97^\# + 10\cdot 101^\# + 11\cdot 103^\#  \\
 & & \quad + 78\cdot 107^\# + 14\cdot 109^\#  +  74\cdot 113^\# + 55\cdot 127^\# \\
 & \approx & 2.2313949 \, E50
 \end{eqnarray*}
This prefix is in the highlighted row, index $j=25$, in Figure~\ref{EngPrefixFig}.

\begin{figure}[hbt]
\centering
\includegraphics[width=5.4in]{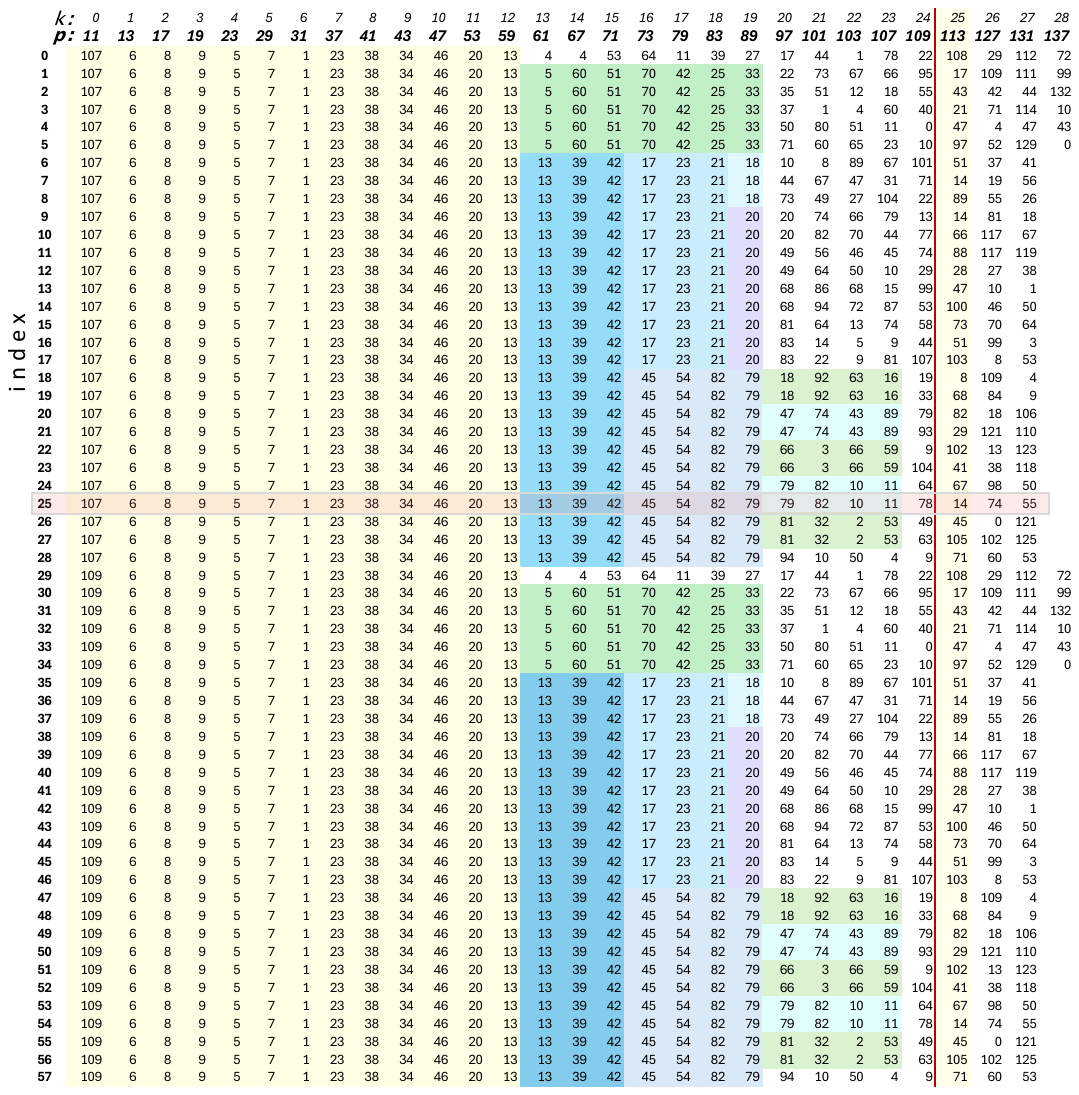}
\caption{\label{EngPrefixFig} Listed here are the primorial coordinates for the unique instances of each of the 58 
$(458,3240)$-counterexamples that have initial generator $\gamma_0=107$ or $\gamma_0=109$ in $\pgap(11^\#)$.  The driving terms
in $\pgap(p^\#)$ for $p < 113$ are inadmissible -- they do not survive.  The counterexamples themselves appear in $\pgap(113^\#)$.
The row of primorial coordinates for the prefix for our example $s_{25}$ is highlighted.}
\end{figure}

Beyond the unique prefixes listed in Figure~\ref{EngPrefixFig}, the instances of each counterexample branch out into a tree.
The number of nodes at first grows slowly but then explodes, as the product of the number of admissible instances ${\delta=p-\nu_p}$
at each stage of the sieve \cite{FBHktuple}.  Exhaustive breadth-first explorations of the instances of any of the
counterexamples run out very quickly.  

This is problematic when we try to consider instances of $(458,3240)$ constellations that occur outside of their $(459,3242)$ parents, as
noted above.  Specifically, if we are hoping to find an instance $\gamma_0$ of a $(458,3240)$ constellation that survives the sieve before the
parent survives \cite{FBHnonconvex}, we need access to this outside population.

% SUBSECTION  :  population model
\subsection{Population model for $(458,3240)$ constellations}
Since neither the $(458,3240)$ constellations or the $(459,3242)$ constellations have driving terms other than the constellations themselves,
the population dynamics \cite{FBHSFU} across stages of Eratosthenes sieve are pretty simple.
Here we segment the population of a $(458,3240)$ constellation $s$ in $\pgap(p^\#)$ into the population $n_{\rm out}(p^\#)$ of instances 
that occur outside of a parent and the population $n_{\rm in}(p^\#)$

We know \cite{FBHktuple} that eventually the ratio of the population of any $(458,3240)$ constellation to any $(459,3242)$ constellation 
in $\pgap(p^\#)$ will go to $0$.  We have also seen that all $(458,3240)$ constellations occur inside their parents through at least 
$\pgap(223^\#)$ which has span $3.67 E86$. 

We have the following model for the evolving populations of a $(458,3240)$ constellation inside and outside of its parent.

\begin{figure}[hbt]
\centering
\includegraphics[width=3.5in]{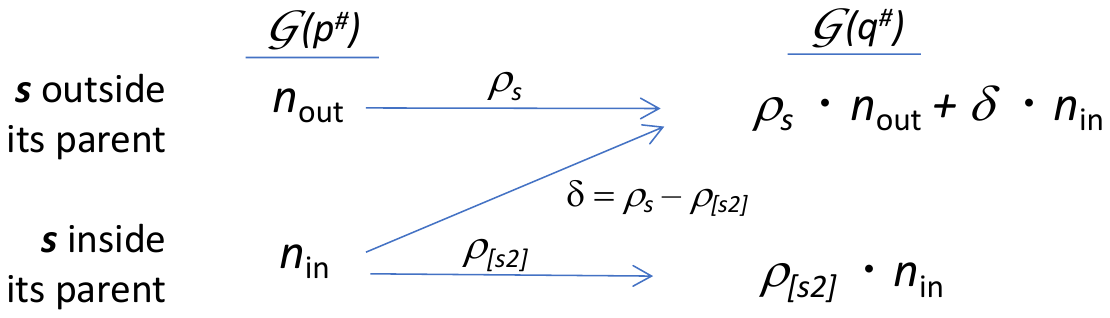}
\caption{\label{ParentSysFig} An illustration of the population dynamics for a $(458,3240)$ constellation and its $(459,3242)$ parent, across
a single stage of Eratosthenes sieve.  Here $p$ and $q$ are consecutive primes, and $\rho_s = q-\nu_q$ is the number of admissible residues
$\bmod q$ for $s$.}
\end{figure}

\begin{lemma}
Let $s$ be a $(458,3240)$ constellation, and denote its $(459,3242)$ parent by $[s 2]$.
Let $p$ and $q$ be consecutive primes, and let $\rho_s = q-\nu_q(s)$ be the number of admissible residues $\bmod q$ for $s$. 

Then the population $n_{\rm out}$ of $s$ outside of its $(459, 3242)$ parent and its population $n_{\rm in}$ inside its parent grow as
$$
\left[ \begin{array}{c} n_{\rm out}(q^\#) \\ n_{\rm in}(q^\#) \end{array} \right] \; = \;
\left[ \begin{array}{cc}
\rho_s & \delta \\
0 & \rho_{[s 2]} \end{array} \right]
\left[ \begin{array}{c} n_{\rm out}(p^\#) \\ n_{\rm in}(p^\#) \end{array} \right] .
$$
Here $\delta = \rho_s - \rho_{[s 2]} \in \set{0,1}$, with $\delta = 0$ for $q < 227$ and $\delta = 1$ for $q > 1621$.
\end{lemma}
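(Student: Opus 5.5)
We prove the lemma by following how instances of $s$ and of its parent $[s\,2]$ in $\pgap(p^\#)$ give rise to instances in $\pgap(q^\#)$ under the three-step recursion \cite{FBHSFU, FBHktuple}. We use the fact stated above that neither $s$ nor $[s\,2]$ has any admissible driving term other than itself. Every instance of $s$ in $\pgap(q^\#)$ therefore descends from an instance of $s$ in $\pgap(p^\#)$. No shorter driving term fuses into $s$ at this stage, so no new instances arise that way.

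\emph{Step 1 (concatenation and fixed residues).} In the recursion, $\pgap(p^\#)$ is concatenated $q$ times. Each instance $\gamma$ of $s$ then has $q$ copies, $\gamma + k\cdot p^\#$ for $0\le k<q$. The $459$ endpoints of a copy run through fixed residues $\bmod\, q$, which are determined by $\gamma \bmod q$. Since $\gcd(p^\#,q)=1$, the map $k \mapsto \gamma + k p^\# \bmod q$ is a bijection. So exactly $\rho_s = q-\nu_q(s)$ of the copies avoid having an interior endpoint struck when the multiples of $q$ are removed. Those copies survive as instances of $s$ in $\pgap(q^\#)$.

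\emph{Step 2 (outside instances).} Suppose $\gamma$ lies outside any instance of the parent. Then the gap adjacent to the end of $s$ is not $2$. Fusion happens only at the two outer endpoints of a copy, and it only enlarges the flanking gaps, so the surviving copies still lie outside a parent. This gives the entry $(1,1)=\rho_s$ and the entry $(2,1)=0$.

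\emph{Step 3 (inside instances).} Suppose $\gamma$ lies inside an instance of $[s\,2]$. Of the $q$ copies, $\rho_{[s2]}$ avoid striking every endpoint of $[s\,2]$, and these remain inside a parent. The copies that keep $s$ but strike the extra endpoint of $[s\,2]$ are exactly those whose residue avoids $\nu_q(s)$ classes but hits the one extra class contributed by the final endpoint. There are $\rho_s-\rho_{[s2]}=\delta$ of them. For such a copy, the final gap $2$ fuses with the next gap, so $s$ survives outside its parent. This gives the entries $(2,2)=\rho_{[s2]}$ and $(1,2)=\delta$. Adding one endpoint can add at most one residue class, so $\delta\in\{0,1\}$. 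The value $\delta=1$ occurs exactly when the residue of the extra endpoint is distinct from all residues of the endpoints of $s$.

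\emph{Step 4 (the range of $\delta$).} For $q<227$, $\delta=0$ is exactly what the preceding lemma shows, via the computations in Figure~\ref{EngAdmisFig}. For large $q$, $\delta=1$ fails only if $q$ divides a difference between the extra endpoint and some endpoint of $s$. These differences are bounded by $3242$, and none is $0$. Hence $\delta=1$ whenever $q>3242$. To bring the threshold down to $1621$, take $q>1621$ and suppose $\delta=0$. Then $q$ divides some nonzero difference $d\le 3242<2q$, so $d=q$. This forces an endpoint of $s$ at distance exactly $q$ from the extra endpoint. Since $q$ is prime and $q>3$, that would be a specific coincidence, and it must be excluded by checking the $459$ endpoint offsets of each of the $116$ constellations against the primes in $(1621,3242]$.

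\textbf{Main obstacle.} This exclusion is the difficult step. It seems to need either a direct computation over the Engelsma data, or a structural argument that those offsets never differ by a prime in that range. I would first attempt the computation, scanning the offsets for prime differences above $1621$. If that yields exceptions, I would verify $\delta=1$ prime by prime at the flagged values of $q$.
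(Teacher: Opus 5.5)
\paragraph{Review of the proof proposal.}
Your Steps 1--3 are the paper's argument. Outside instances yield $\rho_s$ images, all still outside, because the flanking gap is already at least $4$ and can only grow. An inside instance yields $\rho_{[s2]}$ images that stay inside a parent, plus $\delta$ images in which the terminal $2$ is fused away. You also read $\delta=0$ for $q<227$ off the computed tables, as the paper does. One small slip in Step 1: $\nu_q(s)$ counts the residues of all $459$ points of $s$, including the two outer ones. So the surviving copies are those with no point struck, not merely no interior point.

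The genuine gap is in Step 4. You correctly reduce the clause ``$\delta=1$ for $q>1621$'' to ruling out an endpoint difference $d$ with $d=q$. You then leave this as an open ``main obstacle'' to be settled by scanning the Engelsma data, so as written that clause is unproved. The missing idea is parity. For $p\ge 2$ every gap in $\pgap(p^\#)$ is even; equivalently, an admissible constellation must miss a residue class mod $2$, so all its points have the same parity. Hence every difference $d$ between points of $[s\,2]$ is even. An odd prime $q$ dividing an even $d$ forces $2q\mid d$, so $d\ge 2q>3242$, which contradicts $0<d\le 3242$. Therefore, for $q>1621=|[s\,2]|/2$, all $460$ points of $[s\,2]$ lie in distinct residue classes mod $q$. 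This gives $\rho_s=q-459$, $\rho_{[s2]}=q-460$ and $\delta=1$, which is exactly the paper's statement that beyond half the span all fusions occur in separate images. No check over the $116$ constellations is needed. The bound is also sharp: at $q=1621$ the two outer points of $[s\,2]$ are $2q$ apart, so $\delta=0$ there.
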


\begin{proof}
Figure~\ref{ParentSysFig} illustrates the dynamics we describe in this proof.

Let $\gamma_0$ be an instance of $s$ in $\pgap(p^\#)$.  If the instance $\gamma_0$ is outside of its parent, then all $\rho_s$ images of
this instance in $\pgap(q^\#)$ remain outside of the parent constellation.

If the instance $\gamma_0$ is inside its parent in $\pgap(p^\#)$, then $\rho_{[s 2]}$ images of this instance occur in images of the parent
in $\pgap(q^\#)$.  If $\rho_{[s 2]} = \rho_s$, then this accounts for all the images of this instance $\gamma_0$ of $s$ in $\pgap(q^\#)$.
If instead $\rho_{[s 2]} < \rho_s$, then there is one image of this instance of $s$ that persists into $\pgap(q^\#)$ when the extremal gap $2$ is
removed from the parent.

From direct calculations of $\rho_s$ and $\rho_{[s 2]}$ across the 59 $(458,3240)$ constellations we know that $\delta=0$ for $q < 227$. 
See Figure~\ref{EngAdmisFig}.

On the other hand, under the recursion $\pgap(p^\#) \fto \pgap(q^\#)$ \cite{FBHSFU}, once $q > 1621$ all of the fusions between
adjacent gaps occur in separate images of $s$ and $[s 2]$.  So for $q > 1621$ we have $\rho_s= q-459$ and $\rho_{[s 2]}=q-460$, 
and thereafter $\delta = 1$.
\end{proof}

In Figure~\ref{ParentSysFig} we plot the proportion of occurrences of the $(458,3240)$ constellations that are inside their parents.
The first $58$ constellations are plotted; the other $58$ are their reversals.  Under symmetry of the cycle $\pgap(p^\#)$, the
$(458,3240)$ constellations of index $j$ and $115-j$ share the same curve within Figure~\ref{ParentSysFig}.  

We mark the thresholds $J+1 = 459$ and $|s|/2=1620$ in the graph.  The first threshold marks the point at which there are more residue
classes $\bmod \: p$ than there are possible fusions within the constellation $s$.  At this point bare admissibility becomes trivial.  The 
second threshold marks the point -- half the span of the constellation -- at which all fusions of adjacent gaps must occur in separate images
of $s$ under the 3-step recursion.  Beyond this point $\delta = 1$, and the curves all decline by factors of $\frac{p-460}{p-459}$.

Before that first threshold $J+1=459$, we see the effect of the data tabulated in Figure~\ref{EngAdmisFig}.  Over the interval $131 \le p \le 459$
the constellations $s_0, \ldots, s_{28}$ have $\delta=1$ either $13$ or $14$ times, and these first occur for $p=227$ and $p=233$.  In contrast,
over this interval the second set of constellations $s_{29}, \ldots, s_{57}$ each have $\delta=1$ only $10$ times, and this doesn't begin until
$p=269$ and $p=271$.  The individual graphs separate into two bundles accordingly.

\begin{figure}[hbt]
\centering
\includegraphics[width=5in]{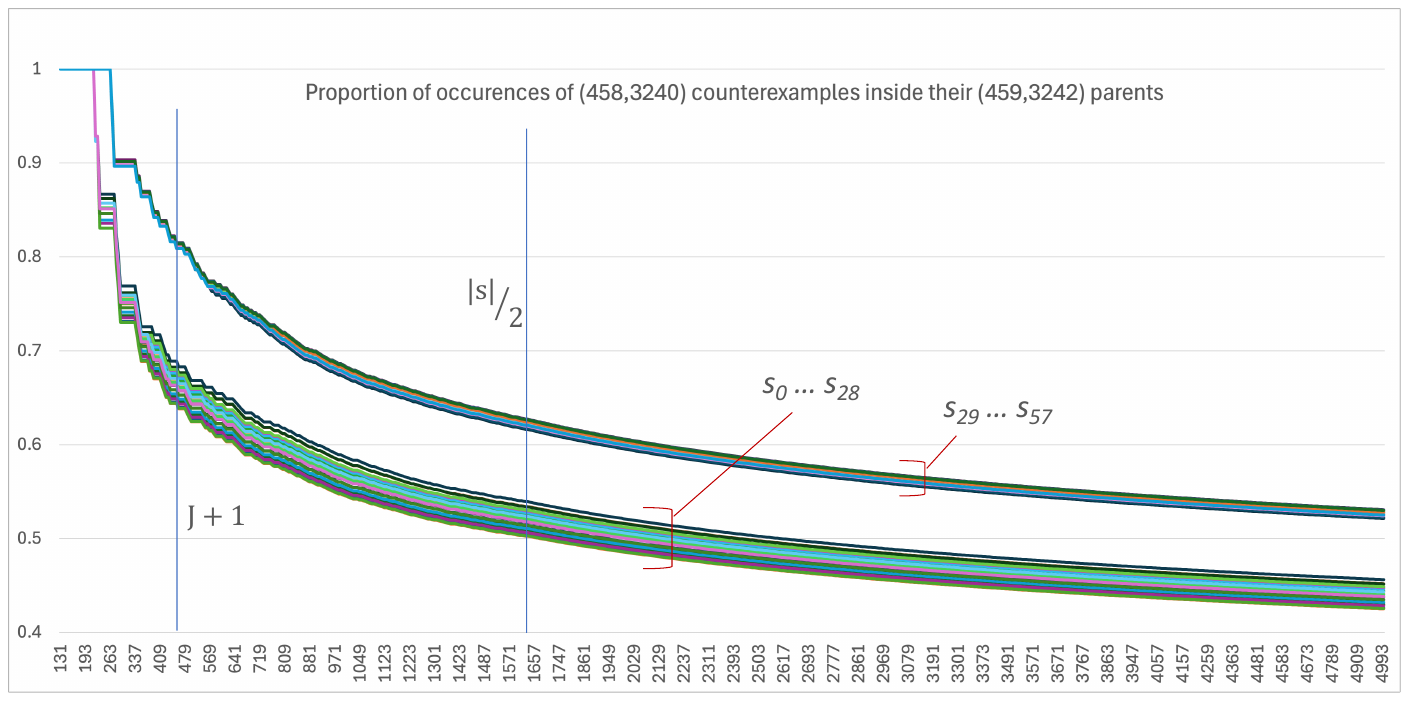}
\caption{\label{ParentsFig} Here we plot the ratio $n_{\rm in}(p^\#) / n_{\rm out}(p^\#)$ for each of the $(458,3240)$ constellations.
For the $116$ constellations there are $58$ distinct curves.  By symmetry of $\pgap(p^\#)$, under reversal of the constellations the curve for 
$s_j$ is the same as the curve for $s_{115-j}$.}
\end{figure}

\subsection{Asymptotic relative populations.}
From \cite{FBHktuple} the asymptotic relative population of a constellation $s$ is the product of two factors.
The first factor is the product of the number of admissible residues up through $k=J+1$, and the second factor runs from $J+1$ through half the span of $s$.
\begin{equation}\label{Eqwinf}
 w_{s,J}(\infty) =  \prod_{q \le J+1} (q-\nu_q) \, \cdot \, \prod_{J+1 < q \le |s|/2}\frac{q-\nu_q}{q-J-1} 
 \end{equation}

The asymptotic relative populations and these two factors are listed in Table~\ref{EngWinfTbl} for the Engelsma $(458,3240)$ constellations.

\begin{table}
\centering
\begin{tabular}{cccc || cccc}
$ic$ & $\prod_{q \le 459}$ & $\prod_{q > 459}$ & $w_{s}(\infty)$ &
$ic$ & $\prod_{q \le 459}$ & $\prod_{q > 459}$ & $w_{s}(\infty)$ \\ \hline
 0,	115	& 1.98{\em E}72 & 5.55{\em E}16 & 1.101{\em E}89	& 29,	86 & 1.57{\em E}72 & 5.65{\em E}16	& 8.896{\em E}88	\\
1,	114	&	6.15{\em E}72	&	5.00{\em E}16	&	3.075{\em E}89	&	30,	85	&	4.88{\em E}72	&	5.14{\em E}16	&	2.507{\em E}89	\\
2,	113	&	6.48{\em E}72	&	4.90{\em E}16	&	3.176{\em E}89	&	31,	84	&	5.18{\em E}72	&	5.01{\em E}16	&	2.597{\em E}89	\\
3,	112	&	3.97{\em E}72	&	5.02{\em E}16	&	1.993{\em E}89	&	32,	83	&	3.19{\em E}72	&	5.14{\em E}16	&	1.639{\em E}89	\\
4,	111	&	2.94{\em E}72	&	4.80{\em E}16	&	1.411{\em E}89	&	33,	82	&	2.36{\em E}72	&	4.91{\em E}16	&	1.157{\em E}89	\\
5,	110	&	2.27{\em E}72	&	5.03{\em E}16	&	1.144{\em E}89	&	34,	81	&	1.80{\em E}72	&	5.15{\em E}16	&	9.270{\em E}88	\\
6,	109	&	9.54{\em E}72	&	5.70{\em E}16	&	5.444{\em E}89	&	35,	80	&	8.07{\em E}72	&	5.80{\em E}16	&	4.685{\em E}89	\\
7,	108	&	5.19{\em E}72	&	5.73{\em E}16	&	2.974{\em E}89	&	36,	79	&	4.35{\em E}72	&	5.83{\em E}16	&	2.538{\em E}89	\\
8,	107	&	1.13{\em E}73	&	5.61{\em E}16	&	6.357{\em E}89	&	37,	78	&	9.55{\em E}72	&	5.70{\em E}16	&	5.445{\em E}89	\\
9,	106	&	5.94{\em E}72	&	5.66{\em E}16	&	3.358{\em E}89	&	38,	77	&	4.90{\em E}72	&	5.73{\em E}16	&	2.811{\em E}89	\\
10,	105	&	6.80{\em E}72	&	5.70{\em E}16	&	3.878{\em E}89	&	39,	76	&	5.62{\em E}72	&	5.78{\em E}16	&	3.251{\em E}89	\\
11,	104	&	1.36{\em E}73	&	5.58{\em E}16	&	7.575{\em E}89	&	40,	75	&	1.12{\em E}73	&	5.65{\em E}16	&	6.342{\em E}89	\\
12,	103	&	1.45{\em E}73	&	5.63{\em E}16	&	8.162{\em E}89	&	41,	74	&	1.20{\em E}73	&	5.70{\em E}16	&	6.866{\em E}89	\\
13,	102	&	1.50{\em E}73	&	5.65{\em E}16	&	8.487{\em E}89	&	42,	73	&	1.24{\em E}73	&	5.75{\em E}16	&	7.145{\em E}89	\\
14,	101	&	1.72{\em E}73	&	5.72{\em E}16	&	9.859{\em E}89	&	43,	72	&	1.43{\em E}73	&	5.83{\em E}16	&	8.310{\em E}89	\\
15,	100	&	1.92{\em E}73	&	5.53{\em E}16	&	1.061{\em E}90	&	44,	71	&	1.59{\em E}73	&	5.61{\em E}16	&	8.919{\em E}89	\\
16,	99	&	1.05{\em E}73	&	5.63{\em E}16	&	5.930{\em E}89	&	45,	70	&	8.78{\em E}72	&	5.70{\em E}16	&	5.010{\em E}89	\\
17,	98	&	1.20{\em E}73	&	5.68{\em E}16	&	6.836{\em E}89	&	46,	69	&	1.01{\em E}73	&	5.75{\em E}16	&	5.782{\em E}89	\\
18,	97	&	1.52{\em E}73	&	5.48{\em E}16	&	8.332{\em E}89	&	47,	68	&	1.23{\em E}73	&	5.57{\em E}16	&	6.871{\em E}89	\\
19,	96	&	1.82{\em E}73	&	5.75{\em E}16	&	1.044{\em E}90	&	48,	67	&	1.48{\em E}73	&	5.83{\em E}16	&	8.647{\em E}89	\\
20,	95	&	3.20{\em E}73	&	5.40{\em E}16	&	1.728{\em E}90	&	49,	66	&	2.60{\em E}73	&	5.49{\em E}16	&	1.426{\em E}90	\\
21,	94	&	3.90{\em E}73	&	5.65{\em E}16	&	2.205{\em E}90	&	50,	65	&	3.19{\em E}73	&	5.73{\em E}16	&	1.826{\em E}90	\\
22,	93	&	4.58{\em E}73	&	5.72{\em E}16	&	2.623{\em E}90	&	51,	64	&	3.74{\em E}73	&	5.84{\em E}16	&	2.184{\em E}90	\\
23,	92	&	3.87{\em E}73	&	5.45{\em E}16	&	2.112{\em E}90	&	52,	63	&	3.14{\em E}73	&	5.58{\em E}16	&	1.751{\em E}90	\\
24,	91	&	4.39{\em E}73	&	5.33{\em E}16	&	2.343{\em E}90	&	53,	62	&	3.58{\em E}73	&	5.43{\em E}16	&	1.944{\em E}90	\\
{\bf 25,	90}	&	5.20{\em E}73	&	5.59{\em E}16	&	2.911{\em E}90	&	54,	61	&	4.27{\em E}73	&	5.68{\em E}16	&	2.422{\em E}90	\\
26,	89	&	2.65{\em E}73	&	5.45{\em E}16	&	1.442{\em E}90	&	55,	60	&	2.17{\em E}73	&	5.54{\em E}16	&	1.203{\em E}90	\\
27,	88	&	3.14{\em E}73	&	5.66{\em E}16	&	1.774{\em E}90	&	56,	59	&	2.58{\em E}73	&	5.74{\em E}16	&	1.483{\em E}90	\\
28,	87	&	1.75{\em E}73	&	5.21{\em E}16	&	9.114{\em E}89	&	57,	58	&	1.43{\em E}73	&	5.30{\em E}16	&	7.605{\em E}89	\\ \hline
\end{tabular}
\caption{ \label{EngWinfTbl} The asymptotic relative populations are listed for all $116$ of the Engelsma $(458,3240)$ constellations.  The two factors,
for $p \le 459$ and for $459 < p \le 1620$ from Equation~\ref{Eqwinf}, are also listed.}
\end{table}

Note that $s_{25}$ and its reversal $s_{90}$ have asymptotic relative populations of $8.657E90$, $32.73$ times higher than
the asymptotic relative populations for $s_{29}$ and its reversal $s_{86}$.
Most of this variation occurs in the first factor, the number of instances for $q \le 459$.

Recall from our earlier work \cite{FBHPatterns, FBHktuple} that the relative populations can only be compared across admissible constellations 
of the same length $J$.

At face value these numbers $w_{s,J}(\infty)$ seem large, but we have very few meaningful examples of constellations of length $J=458$ 
with which to compare these values.  There is the repetition of the gap $g=457^\#$ of length $J=458$ and span ${|s|=1.00368 E190}$ (vs $3240$).  
This repetition has asymptotic 
relative population ${\phi(457^\#)\approx 1.99 E186}$.

% SECTION:  conclusion
\section{Conclusion}
Repeating our approach in \cite{FBHnonconvex}, we analyze the evolution of instances of the $116$ constellations identified by Engelsma \cite{Engtbl, Eng2} of length $J=458$ and span $|s|=3240$.  These are nonconvex constellations, by which we mean
$$ \pi(|s|) < J.$$
If any instance of any of these $(458,3240)$ constellations occurred among primes, this occurrence would provide a counterexample to the
convexity conjecture.

We are unable to provide any such instance among primes.  We do know \cite{FBHktuple} that as admissible constellations all of the
$(458,3240)$ constellations arise in the cycles of gaps $\pgap(p^\#)$ among the $p$-rough numbers.  
We track the evolution of instances of the constellations across the early stages of Eratosthenes sieve, 
and we calculate the values for their asymptotic relative populations. 

 No instance of any of the $116$ $(458,3240)$-counterexamples occur among primes before $9.7\,E73$.  At this point they are all still inside their
 $(459,3242)$ parents.

We note in particular the close relationship between the $116$ $(458,3240)$ constellations and the $58$ $(459,3242)$ constellations studied in
\cite{FBHnonconvex}.  Our calculations show that every instance of the $(458,3240)$ constellations occurs within a $(459,3242)$ constellation (its parent),
until $\pgap(227^\#)$ at least.  We study the slow shift from instances inside the parents to instances outside the parents.

% SECTION ==== BOUNDARY === scraps below ==== BOUNDARY ====
% == graph of Delta-Phi
%\begin{figure}[hbt]
%\centering
%\includegraphics[width=5in]{Eng458_25_vpi.pdf}
%\caption{\label{Eng25vpiFig} The $(458,3240)$-counterexample $s_{25}$ plotted (in blue) as a segment of 
%$\Delta \Phi(x,\mu)$, compared to the
%gaps between primes (in red) from $0$ to $x$. The small gaps in $\pi(x)$ create a huge early lead, but as larger gaps occur between primes,
%it gives narrow constellations an opportunity to overtake it.   $s_{25}$ first crosses above the red graph for $(458,3240)$, and stays above for $(459,3242)$.}
%\end{figure}

%We graph the constellation $s_{25}$ against $\pi(x)$ in Figure~\ref{Eng25vpiFig}. 
%relative population among the $(458,3240)$-counterexamples.

\bibliographystyle{alpha}
\bibliography{primes2024}

\end{document}